\documentclass[journal]{IEEEtran}
\pagestyle{empty}
\usepackage{paralist,tikz,graphicx,amsmath, amsfonts,amssymb,amsthm,overpic, latexsym,epsfig,color,rotating,paralist,times,float,subcaption,algorithm,verbatim,enumitem,url}
\usetikzlibrary{fit,positioning}
\usepackage{tikz,pgfplots,stfloats}

\usepackage[font=scriptsize]{caption}


\usepackage{tikz}
\usetikzlibrary{patterns}

\usepackage{amsmath}
\usepackage{amssymb}
\usepackage{amsthm}
\usepackage{bbm}
\usepackage{color}
\usepackage{algorithm}
\usepackage{algpseudocode}
\usepackage{url}
\usepackage{mathtools}
\usepackage{xfrac}

\usepackage{enumitem}
\usepackage{geometry}
\geometry{margin=2cm}

\usepackage{tikz}
\usetikzlibrary{calc,patterns,decorations.pathmorphing,decorations.markings,arrows}
\usetikzlibrary{quotes,arrows.meta}

\tikzstyle{block} = [draw, rectangle, minimum height=3em, minimum width=3em]
\tikzstyle{sum} = [draw, circle, node distance=1.5cm]
\tikzstyle{input} = [coordinate]
\tikzstyle{output} = [coordinate]

\newcommand{\skor}{\mathcal{S}}

\newcommand{\ones}{\mathbf{1}}

\newcommand{\E}{\mathbb{E}}
\newcommand{\obs}{Y}

\newcommand{\reals}{\mathbb{R}}

\newcommand{\ind}{\boldsymbol{1}}
\newcommand{\CE}{\mathbb{E}}

\renewcommand{\leq}{\leqslant}
\renewcommand{\geq}{\geqslant}
\renewcommand{\epsilon}{\varepsilon}
\renewcommand{\phi}{\varphi}
\renewcommand{\th}{\theta}
\newcommand{\noise}{z}
\newcommand{\wloss}{C}
\newcommand{\defn}{\overset{\textnormal{defn}}{=}}

\newcommand{\argmin}{\operatorname{argmin}}
\newcommand{\loss}{\ell}
\newcommand{\Xt}{X^\th}
\newcommand{\Loss}{L}
\newcommand{\kernel}{K}
\newcommand{\Th}{\Theta}
\newcommand{\CF}{\mathcal{F}}
\newcommand{\1}{\boldsymbol{1}}

\newtheorem{theorem}{Theorem}

\setlength{\parindent}{0pt}


\title{\vspace{-1.3cm} Malliavin Calculus with Weak Derivatives for Counterfactual Stochastic Optimization}
\author{Vikram Krishnamurthy, \and Luke Snow \thanks{Vikram Krishnamurthy and Luke Snow are  with the  School of Electrical and Computer Engineering, Cornell University.
    Email: vikramk@cornell.edu  and las474@cornell.edu. \\  This  research was supported by the 
    National Science Foundation under grants CCF-2312198 and  CCF-2112457.}
}
\date{}

\begin{document}
\maketitle
\thispagestyle{empty}


\begin{abstract}

  We study counterfactual  stochastic optimization  of  conditional loss functionals under   misspecified and noisy gradient information. The difficulty is that when the conditioning event  has vanishing or zero probability, naïve Monte Carlo estimators are prohibitively inefficient; kernel smoothing, though common, suffers from slow convergence. We propose  a two-stage kernel-free methodology. First, we show using Malliavin calculus that the conditional loss functional of a diffusion process admits an exact
  representation  as a Skorohod integral, yielding variance comparable to  classical Monte-Carlo variance.
  Second, we establish that a weak derivative estimate of the conditional loss functional with respect to
  model parameters can be evaluated with constant variance, in contrast to the widely used score function method whose variance grows linearly in the sample path length. Together, these results yield an efficient framework for counterfactual conditional stochastic gradient algorithms in rare-event regimes.
    \end{abstract}

\section{Introduction and Problem Formulation}

Consider the  stochastic differential equation (SDE)
\begin{equation}
  \label{eq:sde}
  dX_t = b_\th(X_t,t) dt + \sigma(X_t,t) dW_t, \quad t \in [0,T]
\end{equation}
where $W$ denotes $d$-dimensional standard Brownian motion.
Our  aim is estimate  the minimizer of the conditional loss function 
\begin{equation}
  \label{eq:objective}
  \argmin_{\th \in \Th} \Loss(\th) =  \E[  \ell(X^\th)   \mid  g(X^\th) = 0]
\end{equation}
where $\Th$ is a compact subset of $\reals^p$, $\Loss(\cdot)$ is continuous, and $\loss(\cdot), g(\cdot)$ are functionals. In addition, we assume that: \\  (i) The functions  $(b_\th,\sigma,\ell,g)$ are known.  \\
       (ii) We are given $N$ simulated sample paths of  $X$, but we cannot control these sample paths to ensure $g(\Xt) = 0$. \\
       (iii) The event  $\{g(\Xt) = 0\}$  has low (zero) probability.

For the purpose of exposition, we can re-express the loss as
\begin{equation}
  \label{eq:equiv_loss}
  \Loss(\th) = \frac{\E\{ \ell(X^\th) \,\delta(g(X^\th)) \}}{\E\{\delta(g(X^\th))\}   }
\end{equation}
where $\delta(g(X))$ denotes the Dirac delta  centered at zero.

{\bf Example}. To illustrate the main idea, suppose we choose
$$\ell(X) = \int_0^T h(X_s) ds , \quad g(X) = X_{T/2}-x, \quad x \in \reals^n.$$  Here the loss
functional $\ell$ is specified by a suitably chosen 
 function $h(\cdot)$.
 Also $g(X)=0$ imposes an anticipatory constraint on the sample path at time $T/2$.  The counterfactual optimization asks: {\em Given sample paths of $X$ that we cannot control or simulate directly,  how can  we  minimize  $\Loss(\th)$ under the hypothetical condition that  the sample  paths pass through  specific point $x$ at time $t=T/2$?} 
 Even with full  control over simulations of $X$, it is infeasible to generate sample paths that satisfy the zero-probability event  $\{X_{T/2}=x\}$, for  fixed $x$.  More generally, for functional constraints such as 
 $g(X)  = \int_0^T \gamma(X_s) ds$,  no feasible simulation strategy can directly enforce $g(X)=0$.


 {\bf Limitation of Kernel Methods}. 
Naive Monte-Carlo  estimation of  $\Loss(\th)$ in~\eqref{eq:equiv_loss} fails   due to Dirac delta in  the denominator. 
The classical workaround is to use a kernel method: approximate  the Dirac delta  $\delta(g(X))$  by a kernel $\kernel_\Delta(g(X))$ where $\Delta$ denotes the kernel bandwidth. Typically  $\kernel_\Delta$ is  a multivariate Gaussian density and $\Delta$ controls its variance.
The kernel-based Monte-Carlo estimator for the loss $\Loss$ given $N$ independent realizations $X^{(i)},i=1,\ldots,N$ of $X$ is
$$ \hat{\Loss}(\th) = \frac{ \sum_{i=1}^N \ell(\th,X_{[0,T]}^{(i)}) \, \kernel_\Delta(g(X^{(i)})) }{\sum_{i=1}^N  \kernel_\Delta(g(X^{(i)}) }. $$
But the variance of the estimate of $\Loss(\th)$ depends on the kernel bandwidth $\Delta$ and convergence becomes  excruciatingly slow for large~$n$ or small-probability events $\{g(\Xt)=0\}
$.

\subsection{Main Results} This paper  develops  a two-stage kernel free approach for counterfactual stochastic optimization: \\ 
(i) \textit{Loss evaluation via Malliavin calculus}. We show that  
 $\loss(\Xt) \delta(g(\Xt))$ and $\delta(g(\Xt))$ in~\eqref{eq:equiv_loss} admit exact Skorohod integral representations. Their  expectation can therefore be computed using classical Monte-Carlo.
For $N$ independent trajectories generated by~\eqref{eq:sde}, the estimator
achieves $O(1/N)$ variance, identical to  classical Monte-Carlo, even in rare-event settings \cite{FLL01,BET04,Cri10}.
\\
 (ii) \textit{Gradient estimation via weak derivatives}. We show that the gradient $\nabla_\th \Loss(\th)$ can be estimated efficiently using a weak derivative approach based on the Hahn-Jordan decomposition. The variance of the gradient estimate is $O(1)$. This is in comparison to the widely used score function estimator which has variance $O(T)$. Weak derivative estimators are studied extensively in \cite{Pfl96,HV08,KV12}.

 By combining (i) and (ii), we obtain a counterfactual stochastic gradient algorithm that converges to a local stationary point of~$\Loss(\th)$.
The procedure is displayed in Figure~\ref{fig:stochgrad}.

\begin{figure}[h]
  
  \begin{tikzpicture}[scale=0.7, transform shape,node distance = 4.5cm, auto]
    \tikzstyle{arrow} = [thick,->,>=stealth]
    \tikzset{
    block/.style={rectangle, draw, line width=0.5mm, black, text width=4em, text centered,
                 minimum height=2em},
               line/.style={draw, -latex}}
   \tikzset{
    block2/.style={rectangle, draw, line width=0.5mm, black, text width=6em, text centered,
                 minimum height=2em},
               line/.style={draw, -latex}}             

             \node[block](Markov){SDE};
  \node[block2,right of = Markov,node distance=3.2cm](sensor1){Malliavin for loss evaluation};             
  \node[block2,right of = sensor1,node distance=3.2cm](sensor){Weak Derivative};
  \node[block2,right of=sensor, node distance=4.2cm](filter){Stochastic Gradient Algorithm};
  \node[right of=filter,node distance=2.2cm](nullnode){};
  \draw[-Latex](Markov) -- node[above] {$X^{\th_n}$}  (sensor1);
  \draw[-Latex](sensor) -- node[above] {$\hat\nabla_\th \ell(X^{\th_n})$} (filter);
 \draw[-Latex](sensor1) --  (sensor);
  
  \draw[-Latex](filter) -- node[above,pos=0.55] {$\th_{n+1}$} (nullnode);
  \node[draw,inner sep=4pt,dashed,fit={(sensor1) (filter)},label={Counterfactual Stochastic Approximation   Algorithm}] {};
  \draw[-Latex] (filter.east) -- ++(1,0) |- ([yshift=-0.5cm]Markov.south)
  -|
  (Markov.south);
\end{tikzpicture}
\caption{Counterfactual Stochastic Gradient Algorithm}
\label{fig:stochgrad}
\end{figure}
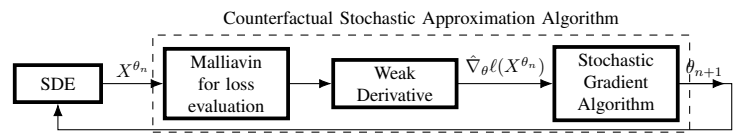

{\em Remark: Model fitting}.
 The above framework 
 aims to choose $\th$  to control  SDE~\eqref{eq:sde} to minimize the conditional loss $\Loss(\th)$. The framework also applies to model fitting: fit 
 the SDE~\eqref{eq:sde} to $N$ externally  generated data trajectories  $\obs_{0:T}^{(1)},\ldots \obs_{0:T}^{(N)}$. In this case, one seeks to minimize the conditional loss  $L(\th) = \E\{\loss(Y,\Xt) | g(\Xt) = 0 \}$.  

 \subsection{Context. Passive Learning}
 The above counterfactual stochastic optimization framework also arises in 
 passive learning,  goal conditioned diffusion models, and stochastic optimization with safety/anticipatory constraints.
 To give additional insight, we briefly discuss our problem in terms of passive stochastic approximation.
 
 In classical stochastic approximation we  observe a sequence of noisy gradients $\{\nabla \loss(\th_k,\noise_k)\}$ where $\noise_k$ is a noisy signal and  $\nabla \loss(\th,\noise) $ is an asymptotically unbiased estimate of $\nabla \Loss(\th)$. We optimize
$\Loss(\th) = \E\{\ell(\th_k,\noise_k)\}$ via the stochastic gradient algorithm  
$$ \th_{k+1} = \th_k - \epsilon \nabla \ell(\th_k,\noise_k) $$
Under reasonable conditions \cite{KY03}, the interpolated trajectory of the estimate $\{\th_k\}$ converges weakly to the ordinary differential equation (ODE)
\begin{equation}
  \label{eq:ODE}
 \frac{d\th}{dt} = \nabla L(\th) . 
\end{equation}    

 {\bf Passive Learning}.  
    In passive stochastic optimization \cite{NPT89,YY96,KY22}, unlike classical stochastic gradient, we observe a sequence of noisy and {\em misspecified gradients}: $\{\alpha_k, \nabla \loss(\alpha_k,\noise_k)\}$, where the parameters $\alpha_k\in \reals^p$ are chosen randomly   according to  probability density $p(\cdot)$, potentially by an adversary.  The passive stochastic gradient algorithm is 
    \begin{equation}
      \label{eq:passive}
    \th_{k+1} = \th_k - \epsilon\,\kernel_\Delta(\alpha_k-\th_k )\,\nabla \ell(\alpha_k,\noise_k) . 
  \end{equation}
  The kernel $\kernel(\cdot)$ weights the usefulness of the gradient $\nabla\loss(\alpha_k,\noise_k)$ compared to the required gradient
$\nabla\loss(\th_k,\noise_k)$.
If $\th_k$ and $\alpha_k$ are far apart, then kernel is   small and  only a small proportion of the gradient estimate $\nabla\loss(\alpha_k,\noise_k)$ is added to the stochastic gradient algorithm. On the other hand, if $
\alpha_k = \th_k$,
the algorithm  becomes the  classical  stochastic gradient algorithm. 
Under reasonable conditions, for small bandwidth parameter $\Delta$, the kernel $\kernel_\Delta$ behaves as a Dirac delta and   the interpolated trajectory converges weakly to the ODE
$$\frac{d \th}{dt} = \int_\Theta \pi(\theta) \,\delta(\alpha- \th)\, \nabla L(\alpha)\, d\alpha  =p(\th) \, \nabla L(\th).
$$
Notice that this ODE has the same fixed points as~\eqref{eq:ODE}.

{\bf Counterfactual Learning}.
Finally, the counterfactual stochastic optimization problem described above, can be regarded as a passive stochastic optimization problem.  At each stage, we require gradient estimates  
$\nabla_\th \loss(\th,g(\Xt)=0,\noise) $ that are unbiased estimates of
$\nabla_\th \Loss(\th) $ where $\Loss(\th)$ is defined in~\eqref{eq:objective}, but we are instead provided with  noisy and misspecified gradient estimates 
$\nabla_\th \loss(\th,g(\Xt)=a,\noise) $ for random  $a \in \reals$. That is, while
the desired gradient corresponds to the counterfactual constraint
  $[\th,g(\Xt)=0]$,  we only observe  the misspecified noisy gradient evaluated at
$\alpha = [\th,g(\Xt)=a]$.  Therefore, one can use the passive kernel based algorithm~\eqref{eq:passive} to solve the counterfactual stochastic optimization problem.  However, in this paper, we will exploit the structure of the SDE~\eqref{eq:sde} and not use the kernel based algorithm.

\section{Malliavin Calculus Approach to Estimate Conditional Loss}

Malliavin calculus \cite{Nua06} was  developed in the 1970s as a probabilistic method to prove H\"ormander hypoellipticity theorem for the solution of SDEs.  It was later adapted in mathematical finance to compute sensitivities (Greeks) of option prices.  Here, as in \cite{FLL01,BET04,Cri10}, we employ  Malliavin calculus  to \textit{efficiently} evaluate the conditional loss $\E\{\loss(\Xt)|g(\Xt)=0\}$, even when the conditioning event $\{g(\Xt)=0\}$ has vanishing or zero probability. 


\subsection{Preliminaries. Malliavin Calculus.}
\label{sec:mall_pre}

We briefly recall the two central objects.

\subsubsection{Malliavin derivative}
 We work on the probability space $(\Omega,\mathcal{F},\mathbb{P})$ with  a $d$-dimensional Brownian motion $W = (W^1,\dots,W^d)$ and the natural filtration $\{\CF_t\}_{t\geq0}$. 
For a smooth functional $F$ of $W$, the \emph{Malliavin derivative} $D_t F$ is defined as the process measuring 
the infinitesimal sensitivity of $F$ to perturbations of the Brownian path at time $t$. Formally, 
for cylindrical random variables of the form
\[
F = f\bigg( \int_0^T h_1(s)\, dW_s, \dots, \int_0^T h_n(s)\, dW_s \bigg),
\]
with $f \in C_b^\infty(\mathbb{R}^n)$ and $h_i \in L^2([0,T];\mathbb{R}^d)$, the derivative is
\[
D_t F = \sum_{i=1}^n \frac{\partial f}{\partial x_i}\bigg( \int_0^T h_1\, dW, \dots, \int_0^T h_n\, dW \bigg)\, h_i(t).
\]
The closure of this operator in $L^p$ leads to the Sobolev space $\mathbb{D}^{1,p}$ of Malliavin differentiable random variables.

\paragraph{Skorohod integral}
The adjoint of the Malliavin derivative is the \emph{Skorohod integral}, denoted $\skor(u)$.  Indeed, for a process $u \in L^2([0,T]\times \Omega;\mathbb{R}^d)$, $u$ is in the domain of $\skor$ if there exists a square-integrable 
random variable $\skor(u)$ such that for all $F \in \mathbb{D}^{1,2}$,
\begin{equation}
\label{eq:adjoint}
\E[F \, \skor(u)] = \E\!\left[\int_0^T \langle D_t F, u_t \rangle \, dt \right].
\end{equation}
The above adjoint relationship  serves as the definition of the Skorohod integral an can be written abstractly as  $$\langle F, \mathcal{S}(u) \rangle_{L^2(\Omega)} =
\langle D F, u \rangle_{L^2([0,T] \times \Omega)}.$$

When $u$ is adapted to the filtration $\{\CF_t\}_{t\geq0}$, the Skorohod integral $\skor(u)$ coincides with the Itô integral $\int_0^T u_t\, dW_t$. 
In general, $\skor(u)$ extends stochastic integration to non-adapted processes and is sometimes called the \emph{divergence operator}.

\subsubsection{Integration by parts}
The duality relation~\eqref{eq:adjoint}  yields the Malliavin’s integration-by-parts formula, which underpins many applications, 
including Monte Carlo estimation of conditional expectations and sensitivity analysis for SDEs 
(see \cite{Nua06,FLL01,BET04}). 

\subsubsection{Computing Malliavin Derivative and Skorohod Integral} 
\label{sec:mall_comp}
The following properties are the key tools which allow us to compute the Malliavin derivative and Skorohod integral: 
\begin{enumerate}
    \item \textit{Malliavin derivative of diffusion}. For  diffusion process $\{X_t\}_{t\geq 0}$ \eqref{eq:sde}, the  Malliavin derivative $D_sX_t$ is \cite{GM05}
    \begin{equation}
    \label{eq:mall_form}
        D_sX_t = Y_tZ_s\sigma(X_s,s)\1_{s\leq t}
    \end{equation}
    where  $Y_t:= \nabla_x X_t$ is the Jacobian matrix and $Z_t$ is its inverse $Z_t := Y_t^{-1}$.  This, together  with the Malliavin chain rule \cite{Nua06}, facilitates evaluating  Malliavin derivatives of general functions of diffusions.
    \item \textit{Skorohod expansion}. For random variable $F\in \mathbb{D}^{1,2}$ and Skorohod-integrable process $u$, we have \cite[eq. 2.2]{GM05}:
    \begin{equation}
    \label{eq:skor_exp}
    \skor(F u) = F\skor(u) - \int_0^TD_tF\cdot u_t dt
    \end{equation}
    In general,  the Skorohod integrand $\{u_t\}_{t\in[0,T]}$ of interest may be  non-adapted. However, in the special case where $u$ factorizes into  the product of an adapted process $\hat{u}=\{\hat{u}_t\}_{t\in[0,T]}$ and an anticipative random variable $F$, this formula gives a constructive expression.  Specifically, we can expand $\skor(u) = \skor(F\hat{u}) $ using  \eqref{eq:skor_exp} and compute it in terms of a  standard It\'o integral of the adapted part $\hat{u}$ together with the  Malliavin derivative of the anticipatory random variable $F$. 
    
\end{enumerate}

\subsection{Malliavin Calculus Expression for Conditional Expectation}
The following main result expresses   the conditional expectation \eqref{eq:objective} as the ratio of unconditional expectations. 
\begin{theorem}  Assume
$\ell(\Xt), g(\Xt) \in L^2(\Omega)$ and  $D_t\ell(\Xt), D_tg(\Xt) \in L^2(\Omega \times [0,T])$. Then  the conditional loss $\Loss$ in~\eqref{eq:equiv_loss} is
\begin{align}
\begin{split}
\label{eq:malliavin}
 &\Loss(\th)  =\E[ \ell(X^\th)  \mid g(X^\th) = 0]  =  \frac{E_1^\th}{E_2^\th} \\
   &\text{ where } \\&E_1^\th 
 = \E\bigg[ \ones_{\{g(\Xt)>0\}}\left(\ell(X^\th) \skor(u)  - 
  \int_0^T (D_t \ell(\Xt)) u_t dt\right) \bigg]\\  &E_2^\th =
\E[  \ones_{\{g(\Xt)>0\}} \skor(u)]
\end{split}
\end{align}
Here 
$u$ is any process that satisfies  
\begin{equation}
\label{eq:ut_cond}
\E[\int_0^T D_tg(\Xt) u_t] = 1
\end{equation}
\end{theorem}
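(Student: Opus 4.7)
The plan is to reduce the theorem to a single ``master identity'' that removes the Dirac delta from the formal representation \eqref{eq:equiv_loss}, and then specialize it to the numerator and the denominator. Concretely, I would prove that for every $F \in \mathbb{D}^{1,2}$,
\[
\E\bigl[F\, \delta(g(\Xt))\bigr] \;=\; \E\!\left[\ones_{\{g(\Xt)>0\}}\Bigl(F\,\skor(u) - \int_0^T (D_tF)\, u_t\, dt\Bigr)\right].
\]
Applying this with $F=\loss(\Xt)$ yields $E_1^\th$; applying it with $F\equiv 1$ kills the Malliavin-derivative term and yields $E_2^\th$; taking the ratio is precisely~\eqref{eq:malliavin}. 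The $L^2$ hypotheses on $\loss(\Xt),g(\Xt)$ and their Malliavin derivatives are exactly what is needed to make both sides well defined and to legitimize the Skorohod expansion~\eqref{eq:skor_exp} applied to $\ones_{\{g(\Xt)>0\}}F\,u$.

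To establish the master identity I would proceed in three steps. First, set $H(x)=\ones_{\{x>0\}}$, so that $H'=\delta$ and, by the Malliavin chain rule, $D_tH(g(\Xt)) = \delta(g(\Xt))\,D_tg(\Xt)$ (at this stage purely formal). Second, apply the Skorohod expansion formula \eqref{eq:skor_exp} to the integrand $H(g(\Xt))\,F\,u$, namely
\[
\skor\bigl(H(g(\Xt))F\,u\bigr) = H(g(\Xt))F\,\skor(u) - \int_0^T D_t\bigl(H(g(\Xt))F\bigr)\, u_t\, dt,
\]
and expand the Malliavin derivative by the product rule as $D_t(HF)=H\,D_tF+F\,\delta(g(\Xt))\,D_tg(\Xt)$. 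Third, take expectations: the left-hand side vanishes since $\E[\skor(\cdot)]=0$ (which is just \eqref{eq:adjoint} with $F\equiv 1$), and after rearranging one obtains
\[
\E\!\left[F\,\delta(g(\Xt))\int_0^T D_tg(\Xt)\,u_t\,dt\right] = \E\!\left[\ones_{\{g(\Xt)>0\}}\Bigl(F\,\skor(u) - \int_0^T (D_tF)u_t\,dt\Bigr)\right].
\]
The normalization~\eqref{eq:ut_cond}, used in its effective form on the support of $\delta(g(\Xt))$, collapses the left-hand side to $\E[F\,\delta(g(\Xt))]$, giving the master identity.

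The main obstacle is that $H$ is not smooth and $\ones_{\{g(\Xt)>0\}}$ is not in $\mathbb{D}^{1,2}$ in the classical sense, so the chain-rule identity $D_tH(g(\Xt))=\delta(g(\Xt))D_tg(\Xt)$ and the manipulation of $\delta$ above are only formal. I would discharge this by a mollification/limiting argument: replace $\delta$ by a smooth bump $\delta_\epsilon$ with primitive $H_\epsilon\in C_b^\infty$, carry out all the Skorohod and chain-rule manipulations for the smooth objects (where \eqref{eq:adjoint} and \eqref{eq:skor_exp} apply verbatim), and then pass $\epsilon\downarrow 0$ using the $L^2$ hypotheses on $\loss(\Xt), g(\Xt), D_t\loss(\Xt), D_tg(\Xt)$, the fact that $\skor(u)\in L^2(\Omega)$, and dominated convergence. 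A secondary subtlety is that \eqref{eq:ut_cond} is stated in expectation, whereas the formal derivation uses the pathwise normalization $\int_0^T D_tg(\Xt)\,u_t\,dt=1$; since the $\delta$-factor concentrates all mass on $\{g(\Xt)=0\}$, only the conditional value of this integral on that event enters, and the expectation-form condition is sufficient once the limit $\epsilon\downarrow 0$ is taken and Bayes' rule is applied to trade unconditional and conditional expectations in the ratio $E_1^\th/E_2^\th$.
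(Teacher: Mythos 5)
Your proposal is correct and follows essentially the same route as the paper's (outline) proof: the Malliavin chain rule applied to $\ones_{\{g(\Xt)>0\}}\loss(\Xt)$, the duality/integration-by-parts relation~\eqref{eq:adjoint} (which your combination of~\eqref{eq:skor_exp} with $\E[\skor(\cdot)]=0$ reproduces exactly), and the normalization~\eqref{eq:ut_cond}. The mollification of the Heaviside function and the unified ``master identity'' treatment of numerator and denominator are just more careful renderings of steps the paper leaves implicit.
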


\textbf{Proof outline}: We start with \eqref{eq:equiv_loss} and write $\delta(g(\Xt)$ as $\delta(G)$.
Then, by the Malliavin chain rule, the adjoint relation \eqref{eq:adjoint} and the Skorohod integrand condition \eqref{eq:ut_cond}, we have
\begin{align*}
&\E[\ell(\Xt)\,\delta(G)] \\&
=\E\!\left[\int_0^T (D_t(\ell(\Xt))\1_{\{g(\Xt)>0\}}))\,u_t\,dt\right]
\\&= \E\!\Big[\1_{\{g(\Xt)>0\}}\left(\ell(\Xt)\skor(u)-
\int_0^T (D_t\ell(\Xt))u_t\,dt\right)\Big].
\end{align*}
The denominator in \eqref{eq:equiv_loss} can be derived similarly. 

\paragraph*{Remarks}
(i) There is considerable flexibility  in the choice of $u$ in the above theorem.
The canonical choice is:
For $g(\Xt)  \in \mathbb{D}^{1,2}$ with Malliavin derivative 
$D g(\Xt) = \{ D_t g(\Xt) \}_{t \in [0,T]} \in L^2(\Omega; H)$, 
choose 
\begin{equation}
  \label{eq:generalu}
u_t = \frac{D_t g(\Xt)}{\|D g(\Xt)\|_H^2}, \qquad t \in [0,T]
\end{equation}
where $H$ is the Cameron--Martin space with norm
\[
\|h\|_H^2 \defn \int_0^T |h(t)|^2\,dt.
\]
The  choice~\eqref{eq:generalu} ensures that $u \in H$ and is always well-defined. 
However, in certain special cases one may use simpler (though less general) 
expressions. 
For example if $D_t g(\Xt) \neq 0$ a.e., one can choose
\begin{equation}
\label{eq:u_choice}
u_t = \begin{cases} \frac{1}{T D_t g(\Xt)}  & D_t g(\Xt) \neq 0 \\
     1 &  D_t g(\Xt) = 0 .
     \end{cases}
\end{equation}
But one has to be careful with the choice~\eqref{eq:u_choice}. For $g(\Xt) = \int_0^t W_s ds$,
then $D_t g(\Xt)  = T-t$ so that  $u_t = \frac{1}{T D_t g(\Xt)} = \frac{1}{T(T-t)}  \notin H$.
In comparison, choosing $u$ according to \eqref{eq:generalu} yields $u_t = 3(T-t)/T^3 \in H$.

(ii) 
The 
representation~\eqref{eq:malliavin} requires evaluation of Malliavin derivatives and Skorohod integrals, see \cite{GM05} for several examples. There are  several important consequences.  First, it restores the
$N^{-1/2}$ Monte--Carlo convergence rate even under singular conditioning, as
the event $\{g(X^\theta)=0\}$ no longer needs to be sampled directly.  Second,
the estimator admits substantial variance--reduction flexibility: the choice of
localizing function (indicator versus smooth approximation) and of admissible
weight process $u$ strongly influence efficiency, with optimal choices
characterizable via variational principles in Malliavin calculus.  Third, the
representation is compatible with standard discretizations of the forward SDE:
the Malliavin derivatives $D_t X^\theta$ admit recursive Euler--Maruyama
approximations, so one avoids additional kernel bandwidths or curse--of--dimensionality
issues inherent in regression--based methods.

\section{Weak Derivative Estimator}

Applying the quotient rule, 
it follows  from~\eqref{eq:malliavin} that
\begin{equation}
\label{eq:mall_grad}
\nabla_\theta \E[\ell(X^\theta)\mid g(\Xt) = 0]
= \frac{E_2\,\nabla_\theta E_1 - E_1\,\nabla_\theta E_2}{E_2^2}.
\end{equation}
In this section we construct a weak derivative based  algorithm  to estimate $\nabla_\th E_1$ and
$\nabla_\th E_2$ given the SDE~\eqref{eq:sde}. The resulting gradient estimate can then be fed into a stochastic gradient algorithm to minimize the loss $\Loss(\th)$. This weak-derivative method recovers a $O(1)$ variance scaling with respect to the  time horizon $T$, in contrast to score function methods which incur $O(T)$ variance scaling.


\subsection{Discrete-time Weak Derivative of Transition Probabilities}
We start with an Euler discretization of the sample path of the SDE~\eqref{eq:sde}. Let $\Sigma(x,t) := \sigma(x,t)\sigma(x,t)^\top$. The resulting discrete time process has the transition probability given by the multivariate Gaussian
\begin{align}
\begin{split}
\label{eq:gauskern}
&P_{\Delta t}^\theta(x,t,d x')=\mathcal N(x+\Delta t\, b_\theta(x,t),\,\Delta t\,\Sigma(x,t))
\end{split}
\end{align}
In order to analyze parameter sensitivities, one needs to differentiate the family of Markov transition probabilities
$\{P_{\Delta t}^\theta\}_{\theta}$ induced by this Euler discretization. Since each
$P_{\Delta t}^\theta$ is a probability measure on $\mathbb{R}^d$, its derivative with respect to $\theta$
is not a probability measure in general, but rather a \emph{signed measure}. More precisely, if
\[
\nabla_{\theta} P_{\Delta t}^\theta(x,t,\cdot)
\]
exists in the weak sense\footnote{To keep the notation simple and avoid multidimensional matrices, we assume $\theta$ is a scalar parameter. Dealing with $\th \in\reals^p$ simply amounts to interpreting the results elementwise.}, then it defines a bounded signed measure: for every smooth and bounded
test function $f$,
\begin{equation}
\label{eq:wd}
\nabla_{\theta} P_{\Delta t}^\theta f(x) \;=\; \nabla_{\theta}\int_{\reals^d} f(x') \, P_{\Delta t}^\theta(x,t,dx').
\end{equation}
\eqref{eq:wd} is called the weak-derivative\footnote{This weak-derivative is also called the  measure-valued derivative.} of $P_{\Delta t}^\theta$.
By the Hahn–Jordan decomposition theorem \cite{Bil86}, any signed measure $\nu$ on a measurable space can be
expressed as the difference of two mutually singular positive measures:
\[
\nu \;=\; \nu^+ - \nu^-,
\]
with $\nu^+, \nu^-$ uniquely determined. Applying this to the weak derivative
$\nabla_{\theta} P_{\Delta t}^\theta(x,t,\cdot)$, we obtain
\[
\nabla_{\theta} P_{\Delta t}^\theta \;=\; c_{\theta}(\rho_\theta^+ - \rho_\theta^-),
\]
where $\rho_\theta^\pm$ are positive normalized measures, and $c_{\theta}$ is a scaling factor weighting each measure equally. Specifically, for multivariate Gaussian transition probability $P_{\Delta t}^\theta$, the weak-derivative consists of the difference of two Weibull distributions, in each spatial dimension.

First, we formalize the existence of the  weak derivative of the transition probabilities.

\begin{theorem}[Discrete-Time Hahn--Jordan Weak Derivative]
\label{thm:hj_discrete}
Let $(X_t^\theta)_{t\in[0,T]} \subset \reals^n$ solve the Itô SDE \eqref{eq:sde} with $b_\theta \in C_b^2(\reals^n\times \reals;\reals^n)$, $\sigma \in C_b^2(\reals^n \times \reals;\reals^{n\times d})$, where $d$ is the dimension of Brownian motion.  

For $\Delta t > 0$, the Euler--Maruyama scheme induces the Gaussian transition probability \eqref{eq:gauskern}.
Then the weak derivative of $P^\theta_{\Delta t}$ with respect to~$\theta$ admits a Hahn--Jordan decomposition
\begin{equation}
\label{eq:hjd}
\nabla_{\theta} P^\theta_{\Delta t}(x,t,dx') 
= c_{\theta}(x,t)(\rho^+_\theta(x,t,dx') - \rho^-_\theta(x,t,dx')),
\end{equation}
where $\rho^\pm_\theta(x,t,\cdot)$ are mutually singular positive measures. 
Consequently, for any bounded measurable $f:\reals^d\to\reals$,
\begin{align*}
&\nabla_{\theta} \int f(x')\,P^\theta_{\Delta t}(x,t,dx')
\\&= c_{\theta}(x,t)\left(\int f(x')\,\rho^+_\theta(x,t,dx') - \int f(x')\,\rho^-_\theta(x,t,dx')\right).
\end{align*}
\end{theorem}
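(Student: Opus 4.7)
The plan is to reduce the statement to an explicit calculation on the Gaussian transition kernel and then apply the classical Hahn--Jordan decomposition. Since the diffusion coefficient $\sigma$ (and hence $\Sigma$) does not depend on $\theta$, only the mean $\mu_\theta(x,t) := x + \Delta t\, b_\theta(x,t)$ is $\theta$-dependent. First I would write the transition density as
\[
p_{\Delta t}^\theta(x,t,x') = \frac{\exp\!\left(-\tfrac12 (x'-\mu_\theta)^\top (\Delta t\,\Sigma)^{-1}(x'-\mu_\theta)\right)}{(2\pi \Delta t)^{n/2}|\Sigma(x,t)|^{1/2}},
\]
and differentiate in $\theta$ through the chain rule. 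The $C_b^2$ hypothesis on $b_\theta$ together with the Gaussian tails justifies differentiation under the integral sign (dominated convergence with an $L^1$ majorant that is a Gaussian times a polynomial), so $\nabla_\theta P^\theta_{\Delta t}(x,t,\cdot)$ exists as a bounded signed measure on $\reals^n$ with density
\[
\alpha_\theta(x,t,x')\, p_{\Delta t}^\theta(x,t,x'), \qquad \alpha_\theta := \Delta t\, (\nabla_\theta b_\theta(x,t))^\top \Sigma(x,t)^{-1}(x'-\mu_\theta).
\]

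Second, I would invoke the Hahn--Jordan theorem \cite{Bil86} on this signed measure. Because $\alpha_\theta(x,t,\cdot)$ is a non-constant affine function of $x'$ vanishing on the hyperplane $\mathcal H := \{x' : (\nabla_\theta b_\theta)^\top \Sigma^{-1}(x'-\mu_\theta)=0\}$, its sign partitions $\reals^n$ into two open half-spaces $H^+, H^-$ meeting only on $\mathcal H$, which has Lebesgue measure zero. The positive and negative parts of the signed measure are therefore supported on $H^+$ and $H^-$ respectively, establishing mutual singularity. Defining
\[
c_\theta(x,t) := \int_{H^+} \alpha_\theta(x,t,x')\, p_{\Delta t}^\theta(x,t,x')\,dx' = -\int_{H^-} \alpha_\theta\, p_{\Delta t}^\theta\, dx',
\]
where the equality follows from the antisymmetry of $\alpha_\theta$ and the symmetry of $p_{\Delta t}^\theta$ about $\mu_\theta$, I would set
\[
\rho_\theta^{\pm}(x,t,dx') := \frac{1}{c_\theta(x,t)}\,|\alpha_\theta(x,t,x')|\, p_{\Delta t}^\theta(x,t,x')\,\ind_{H^\pm}(x')\,dx',
\]
which are normalized positive measures. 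The displayed integral identity for $\nabla_\theta \int f\, dP^\theta_{\Delta t}$ then follows by splitting the integral over $H^+$ and $H^-$.

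Third, to match the remark in the text that the Hahn decomposition yields Weibull distributions in each spatial coordinate, I would perform an orthogonal change of variables aligning one axis with $\Sigma^{-1}\nabla_\theta b_\theta$. In that frame $\alpha_\theta$ depends only on the first coordinate and is linear in it, so after integrating out the remaining Gaussian marginals the densities of $\rho_\theta^{\pm}$ along the distinguished axis reduce to a factor of the form $|y|\exp(-y^2/(2s^2))\ind_{\{\pm y>0\}}$; this is exactly a reflected Weibull density with shape parameter $2$ (equivalently, a Rayleigh/half-Maxwell density). The other coordinates remain Gaussian, giving a product structure.

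\textbf{Main obstacle.} The conceptual content is light since Hahn--Jordan is automatic once the signed measure is in hand; the delicate step is the verification that $c_\theta(x,t)$ is finite and strictly positive uniformly enough in $(x,t)$ to make the normalized $\rho_\theta^{\pm}$ bona fide probability measures. This hinges on the non-degeneracy of $\Sigma$ and on $\nabla_\theta b_\theta$ not vanishing on a set of positive measure, which I would record as a standing non-degeneracy assumption (implicit in the $C_b^2$ hypothesis plus invertibility of $\Sigma$). The remainder is a symmetry argument and a change-of-variables computation that I would not grind through in detail.
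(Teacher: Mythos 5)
Your proof is correct and follows essentially the same route as the paper's: differentiate the Gaussian density under the integral sign to obtain a finite signed measure, apply Hahn--Jordan, and identify a common normalization constant from the fact that the derivative measure has zero total mass (you obtain this from the symmetry of the Gaussian about its mean, the paper from $\nabla_\theta\int P^\theta_{\Delta t}=0$; both are valid). Your explicit construction of the half-space supports and the Rayleigh/Weibull marginal goes beyond the paper's terse proof and actually substantiates its otherwise unproved remark that the decomposition consists of Weibull distributions; the only slip is an extraneous factor of $\Delta t$ in $\alpha_\theta$ (the correct score is $(\nabla_\theta b_\theta)^\top\Sigma(x,t)^{-1}(x'-\mu_\theta)$, since the $\Delta t$ from $\nabla_\theta\mu_\theta$ cancels against the $\Delta t$ in the covariance), which is harmless to the structure of the argument.
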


\begin{proof}
Since $P^\theta_{\Delta t}$ is Gaussian with mean 
$\mu_\theta(x,t) = x + \Delta t\,b_\theta(x,t)$ and covariance 
$\Sigma(x,t) = \Delta t\,a(x,t)$, the density is smooth in $\theta$ by 
the $C^2_b$ assumption\footnote{$C_b^2$ denotes twice-differentiable bounded functions, and $C_c^{\infty}$ denotes infinitely-differentiable functions with compact support.}. For any $\phi \in C_c^\infty(\reals^n)$,
\[
\nabla_{\theta} \int \phi(x')\,P^\theta_{\Delta t}(x,t,dx') 
= \int \phi(x')\, \nabla_{\theta} p_\theta(x,t,x')\,dx',
\]
where $p_\theta(x,\cdot)$ is the Gaussian density. Thus 
$\nabla_{\theta} P^\theta_{\Delta t}(x,t,\cdot)$ defines a finite signed measure. 
By the Hahn--Jordan decomposition theorem, every finite signed measure admits 
a unique decomposition into two mutually singular positive measures 
$\rho^+_\theta$ and $\rho^-_\theta$. Upon normalization, a common $c_{\theta}$ scale factor will be produced since
\begin{align*}
&\int\nabla_{\theta}P_{\Delta t}^{\theta}(x,t,x')dx' = \nabla_{\theta}\int P_{\Delta t}^{\theta}(x,t,x')dx' \\&= 0 = \int(c_{\theta}^+\rho_{\theta}^+(x') - c_{\theta}^-\rho_{\theta}^-(x'))dx' = c_{\theta}^+ - c_{\theta}^-
\end{align*}
\end{proof}

\begin{figure}
  \includegraphics[scale=0.3]{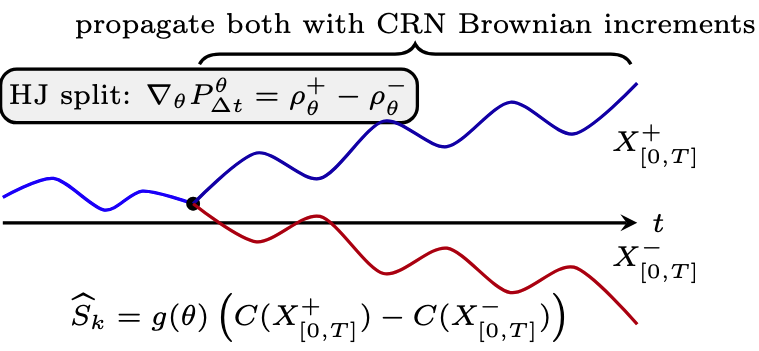}
  \caption{Conceptual Schematic of Hahn-Jordan Decomposition for Weak Derivative Estimator. CRN denotes common random number generation.}
\end{figure}

\textit{Algorithmic Motivation}: The above  weak-derivative representation is amenable to Monte Carlo
implementation: the signed derivative can be simulated by branching into two processes, one evolving
under $\rho_\theta^+$ with weight $+1$, the other under $\rho_\theta^-$ with weight $-1$.
Expectations against the signed measure can then be evaluated as weighted averages of functionals of
these branched processes. This is the weak-derivative simulation method in \cite{Pfl96}, which we outline below. 

\subsection{Weak-Derivative Estimator}
\label{sec:HJWD}

Here we aim to estimate a general gradient $\nabla_{\theta}\E[\ell(\Xt)]$, where the loss depends on the solution path $\Xt$ to the SDE \eqref{eq:sde}. We first outline our proposed weak-derivative estimation algorithm, and compare to the score function method. Then we relate this to computation of the Malliavin gradient \eqref{eq:mall_grad}. For simplicity of implementation, we assume that $\Xt_t$ starts at the stationary distribution of \eqref{eq:sde}. We refer to \cite{MT93} for conditions for exponentially ergodic diffusions with well-defined stationary distributions.

\subsubsection*{Weak-Derivative Gradient Estimation Algorithm} Let the discretization interval $\Delta t=T/M$.  The Euler transition probability at state $x$ is \eqref{eq:gauskern}, and its Hahn--Jordan decomposition is given by \eqref{eq:hjd}. Algorithm~\ref{alg:hjd} provides the methodology for Monte-Carlo estimation of the weak-derivative sensitivity via a Hahn-Jordan path-splitting technique.

\begin{algorithm}[H]\small
\caption{Single–branch HJ estimator (Euler transition probability, drift $b_\theta$, diffusion $\sigma$)}
\label{alg:hjd}
\begin{algorithmic}[1]
\Require continuous time horizon $T$, discrete time horizon $M$, $\Delta t=T/M$, parameter $\theta$, branch index $k$, functional $\wloss$
\State Simulate Euler path $X_0,\dots,X_k$.
\State Form $\rho_\theta^\pm(X_k,k\Delta t, \cdot)$ and $c_{\theta}^\pm(X_k,k\Delta t) =: g(\theta)$.
\State Draw $X_k^{(+)}\sim\rho_\theta^+(X_k,k\Delta t,\cdot)$, $X_k^{(-)}\sim\rho_\theta^-(X_k,k\Delta t,\cdot)$.
\State Generate future Gaussian increments $\{\xi_j\}_{j=k+1}^M$ and reuse them for both branches (CRN).
\State Propagate both branches by Euler from $t_k$ to $T$ using the same $\{\xi_j\}$, forming paths $X^{(+)}_{[0,T]}$ and $X^{(-)}_{[0,T]})$.
\State Return
\begin{equation}
\label{eq:s_k}
    \widehat S_k=g(\theta)\left(\wloss(X^{(+)}_{[0,T]})-\wloss(X^{(-)}_{[0,T]})\right)
\end{equation}
\end{algorithmic}
\end{algorithm}

We evaluate a weak-derivative estimator $\hat{S}_k$ for the gradient $\nabla_{\theta}\E[\ell(\Xt)]$ as \eqref{eq:s_k}
where we first branch according to the weak-derivative transition probability, then propagate the branched paths forward under nominal dynamics and common Gaussian increments, to form paths $X_{[0,T]}^+$ and $X_{[0,T]}^+$. This is exactly analogous to the weak-derivative algorithmic computation in \cite{Pfl96}.

This realizes the discrete weak derivative of a single Euler transition probability,
with \emph{order-1 variance} in $T$ due to a single local branch and synchronous
coupling thereafter. This is in contrast to the score function method \cite{Pfl96}, which incurs $O(T)$ variance scaling.

\textbf{Result} \cite{Pfl96}: Assume exponential ergodicity. Then 
\begin{equation*}
\underset{\Delta t \to 0}{\lim}\E[\widehat S_n]
= \nabla_\theta \E[\wloss(\Xt)], \,\,\,\textrm{Var}(\widehat S_n)=O(1) \textrm{ in } T
\end{equation*}

{\bf Remark}. In comparison, the score function estimate, widely used in reinforcement learning, has $O(T)$ variance growth:
 $$\hat{\nabla}\E[\wloss(\Xt)] \;=\; \frac{1}{N}\sum_{i=1}^N \wloss\!\big(X^{\theta,(i)}_{[0,T]}\big)\;\frac{\nabla_{\theta} p_\theta\!\big(X^{\theta,(i)}_{[0,T]}\big)}{p_\theta\!\big(X^{\theta,(i)}_{[0,T]}\big)}$$ but uses a single sample path.

\subsubsection*{Malliavin Gradient Estimation}
For the counterfactual stochastic gradient algorithm, 
recall from \eqref{eq:mall_grad}, that  we need to compute sensitivities $\nabla_{\theta} E_1^{\theta}$ and $\nabla_{\theta} E_2^{\theta}$, defined in \eqref{eq:malliavin}.  We  compute these sensitivities using Algorithm~\ref{alg:hjd} with the loss functional $\wloss$ replaced respectively by the loss functional of $E_1^{\theta}$ as 
\begin{equation*}
\ones_{g(\Xt)>0}\left(\ell(X^\th) \skor(u) -
  \int_0^T (D_t \ell(\Xt)) u_t dt \right)
\end{equation*}
and of $E_2^{\theta}$ as
\begin{equation*}
    \ones_{\{g(\Xt)>0\}} \skor(u)
\end{equation*}
where $u$ satisfies \eqref{eq:ut_cond}. Notice  that in Algorithm~\ref{alg:hjd}, we only need to plug in these loss functionals, and not their derivatives w.r.t. $\theta$, to compute $\nabla_{\theta} E_1^{\theta}$ and $\nabla_{\theta} E_2^{\theta}$. Recall that computation of $\skor(u)$ and $D_t\ell(\Xt)$ is attained as described in \eqref{eq:mall_form} and \eqref{eq:skor_exp} in Section~\ref{sec:mall_comp}; see~\cite{GM05}. In Section~\ref{sec:num} we illustrate such computation for an Ornstein-Uhlenbeck process.

\subsection{Connection to Infinitesimal Generator}

The weak-derivative estimator is traditionally applied to discrete-time processes. The aim of this section is to show that, by a limiting argument, this method applies to continuous-time diffusions.
Specifically, two complementary perspectives underlie the Hahn--Jordan weak derivative (HJ-WD) method. Discussed thus far is a discrete-time sample-path approach, which is infinitesimally equivalent to a distributional approach derived through the continuous-time Fokker-Planck generator.

\paragraph{Infinitesimal Generator Formulation}
Differentiating the Fokker--Planck equation with respect to~$\theta$ yields the
sensitivity PDE
\[
\partial_t \nu_t = (L^\theta)^{\!*}\nu_t + \nabla_{\theta} (L^\theta)^{\!*}p_t^\theta,
\]
where $\nu_t=\nabla_{\theta} p_t^\theta$. By the Duhamel formula,
\[
\nu_T = \int_0^T P_{T-s}^\theta \big( \nabla_{\theta} (L^\theta)^{\!*}p_s^\theta \big)\,ds,
\]
so the derivative measure at time~$T$ is an integral of \emph{signed mass injections}
$\nabla_{\theta} (L^\theta)^{\!*}p_s^\theta$ transported forward by the semigroup $P^\theta$.
A Hahn--Jordan decomposition can be applied to the signed measure $\nabla_{\theta} (L^\theta)^{\!*}p_s^\theta$.

\paragraph{Discrete Euler Formulation}
The Euler--Maruyama discretization induces transition probabilities \eqref{eq:gauskern}
\[
P^\theta_{\Delta t}(x,d x') = \mathcal{N}\!\big(x+\Delta t\,b_\theta(x),\,\Delta t\,\Sigma(x)\big).
\]
The weak derivative $\nabla_{\theta} P^\theta_{\Delta t}(x,\cdot)$ is itself a finite signed measure,
admitting a Hahn--Jordan decomposition
\(
\nabla_{\theta} P^\theta_{\Delta t} = \rho_\theta^+ - \rho_\theta^-.
\)
By simulating a \emph{single signed branch} at some time step and propagating both copies forward,
one obtains an unbiased estimator for the discretized weak derivative.
Randomization of the branch time recovers the full time integral in expectation, following the
measure-valued derivative framework for Markov chains.

\paragraph{Consistency}
The two perspectives are equivalent in the limit as $\Delta t \to 0$:
the generator-level source $\nabla_{\theta} (L^\theta)^{\!*}p_t^\theta$ is the infinitesimal analogue of the
Euler transition probability derivative $\nabla_{\theta} P^\theta_{\Delta t}$. The result in Section~\ref{sec:HJWD} assumed that we start in the stationary distribution; even if this is not so, we can still recover consistency of the weak-derivative estimator. 
As $\Delta t\to 0$, the Riemann-sum representation
\begin{align*}
    &\sum_{k=0}^{T/\Delta t-1} P^\theta_{T-(k+1)\Delta t}\,\nabla_{\theta} P^\theta_{\Delta t}\,P^\theta_{k\Delta t}
\;\\&\quad \underset{\Delta t \to 0}{\longrightarrow}\;
\int_0^T P^\theta_{T-s}\big(\nabla_{\theta} (L^\theta)^{\!*}p_s^\theta\big)\,ds
\end{align*}
justifies the equivalence\footnote{This limit is clear at least for the smooth Gaussian transition kernel \eqref{eq:gauskern}}.
Thus the Euler/HJ scheme is a Monte Carlo realization of the generator-level
HJ decomposition, with the same $O(1)$ variance properties but implementable in practice.

\section{Numerical Implementation. Ornstein-Uhlenbeck Process}
\label{sec:num}
Here we specify the SDE dynamics to an Ornstein-Uhlenbeck equation, and derive the necessary analytical expressions for the Malliavin numerator and denominator in \eqref{eq:malliavin}.
Despite the simplicity of this model, the evaluation of the conditional loss and its gradient are non-trivial due to the conditioning on a zero-probability event.

Assume we have $N$ simulated sample paths from the following diffusion \[
dX_t^\theta = -\theta X_t^\theta\, dt + \sigma\, dW_t,\quad X_0 = 0
\]
where $\theta > 0$ lies in some compact set $\Th \subset \reals$. The aim is to estimate the counterfactual conditional loss 
\[\nabla_{\theta}\CE[X_1^2 | X_{0.5} = 0].  \] 

Using \eqref{eq:mall_form}, the Malliavin derivative for the Ornstein–Uhlenbeck process is given explicitly by:
\begin{equation}
D_s X_t = \sigma e^{-\theta(t - s)} \ind_{{0 \leq s \leq t}}.
\end{equation}

Therefore, the Malliavin derivative of $g(\Xt) = X_{0.5}$ is:
\begin{equation}
\label{eq:mall_g}
D_s g(\Xt) = D_s X_{0.5} = \begin{cases}
0, & X_t = 0 \\
\sigma e^{-\theta(t - s)}\ind_{{0 \leq s \leq 0.5}}, & X_t \neq 0
\end{cases}
\end{equation}

Therefore the Skorohod integrand process $u$ in~\eqref{eq:u_choice} is
\begin{equation*}
u_s = \begin{cases}
\frac{1}{T D_s g(\Xt)}, & D_s g(\Xt) \neq 0, \\
1, & D_s g(\Xt) = 0.
\end{cases}
\end{equation*}
Then  the conditional expectation can be represented as~\eqref{eq:malliavin}:
\begin{align}
\begin{split}
\label{eq:mall_ce}
&\mathbb{E}[X_1^2 \mid X_{0.5} = 0] \\&= \frac{\mathbb{E}\left[X_1^2 \ind_{{X_{0.5} > 0}} \skor(u) - \ind_{{X_{0.5} > 0}} \int_0^1 (D_s X_1^2) u_s ds\right]}{\mathbb{E}[\ind_{{X_{0.5} > 0}}\skor(u)]}. 
\end{split}
\end{align}

In order to compute \eqref{eq:mall_ce}, we need to compute two quantities: $\skor(u)$ and $D_sX_1^2$. 
\begin{itemize}
\item $\skor(u)$: From~\eqref{eq:mall_g},  $D_s g(\Xt)$ is deterministic and is thus trivially adapted. Recall, when $u$ is adapted to the filtration $\{\CF_t\}_{t\geq0}$, the Skorohod integral $\skor(u)$ coincides with the It\`o integral $\int_0^T u_t\, dW_t$. So $\skor(u)$ is  the It\`o integral
\[\skor(u) = \int_0^{0.5} \frac{1}{\sigma e^{-\theta(0.5-s)}} dW_s+  \int_{0.5}^T dW_s\]

\item $D_sX_1^2$: By the Malliavin chain rule we have is:
\begin{equation}
D_s X_1^2 = 2X_1 D_s X_1 = 2 X_1 \sigma e^{-\theta(1 - s)}\ind_{{0 \leq s \leq 1}}.
\end{equation}
\end{itemize}

Thus, the final explicit Malliavin calculus formulation for the Ornstein–Uhlenbeck conditional loss is 
\begin{align*}
&\mathbb{E}[X_1^2 \mid X_{0.5} = 0] \\&= \biggl\{-\mathbb{E}\biggl[X_1^2 \ind_{{X_{0.5} > 0}} \left(\int_0^{0.5} \frac{1}{\sigma e^{-\theta(0.5-s)}} dW_s+ \int_{0.5}^1 dW_s\right) \\&\qquad - \ind_{X_{0.5} > 0} \int_0^{0.5} (2X_1\sigma e^{-\theta(1-s)} \frac{1}{\sigma e^{-\theta(0.5 - s)}} ds)\biggr]\biggr\}\\&\qquad\times \biggl\{\mathbb{E}[\ind_{{X_{0.5} > 0}} \left(\int_0^{0.5} \frac{1}{\sigma e^{-\theta(0.5-s)}} dW_s + \int_{0.5}^T dW_s\right)\biggr\}^{-1}.
\end{align*}
Thus, we may compute sensitivity by the quotient rule \eqref{eq:mall_ce}, with
\begin{align}
\begin{split}
\label{eq:E_forms}
    E_1 &= \mathbb{E}\biggl[X_1^2 \ind_{{X_{0.5} > 0}} \left(\int_0^{0.5} \frac{1}{\sigma e^{-\theta(0.5-s)}} dW_s+ \int_{0.5}^1 dW_s\right) \\&\qquad\quad- \ind_{X_{0.5} > 0} \int_0^{0.5} (2X_1\sigma e^{-\theta(1-s)} \frac{1}{\sigma e^{-\theta(0.5 - s)}} ds)\biggr] \\
    & = \mathbb{E}\biggl[X_1^2 \ind_{{X_{0.5} > 0}} \left(\frac{1}{\sigma}\int_0^{0.5} e^{\theta(0.5-s)} dW_s+ \int_{0.5}^1 dW_s\right) \\&\qquad\quad- \ind_{X_{0.5} > 0} X_1  e^{-0.5\theta} \biggr] \\
    E_2 &= \mathbb{E}\left[\ind_{{X_{0.5} > 0}} \left(\frac{1}{\sigma}\int_0^{0.5} e^{\theta(0.5-s)} dW_s + \int_{0.5}^1 dW_s\right)\right]
\end{split}
\end{align}

$E_1$ and $E_2$ can be evaluated numerically by Monte-Carlo simulations (taking into account the event $\ones_{X_{0.5}>0}$) from sample paths, and the gradients $\nabla_{\theta}E_1, \nabla_{\theta}E_2$ are computed by Algorithm~\ref{alg:hjd}. 

We now verify that this approach incurs substantial computational advantage via the two complexity features:
\begin{enumerate}
    \item We observe standard $O(N^{-1/2})$ Monte-Carlo error convergence of the estimator, illustrated in Figure~\ref{fig:mc_convg}. This is in stark contrast to direct conditional Monte-Carlo estimators, which are infeasible in this case due to the \textit{measure-zero} conditioning event. 
    \item We observe $O(1)$ variance scaling with respect to the time horizon $T$. This is in contrast to the standard $O(T)$ variance scaling incurred by score function estimators. This disparity is illustrated in Figure~\ref{fig:varscale}
\end{enumerate}

All the results in this paper are fully reproducible. The code that generated the numerical results and figures can be downloaded from  \texttt{\small https://github.com/LukeSnow0/Malliavin-WD}.
\begin{figure}
\centering
\includegraphics[width=0.8\linewidth]{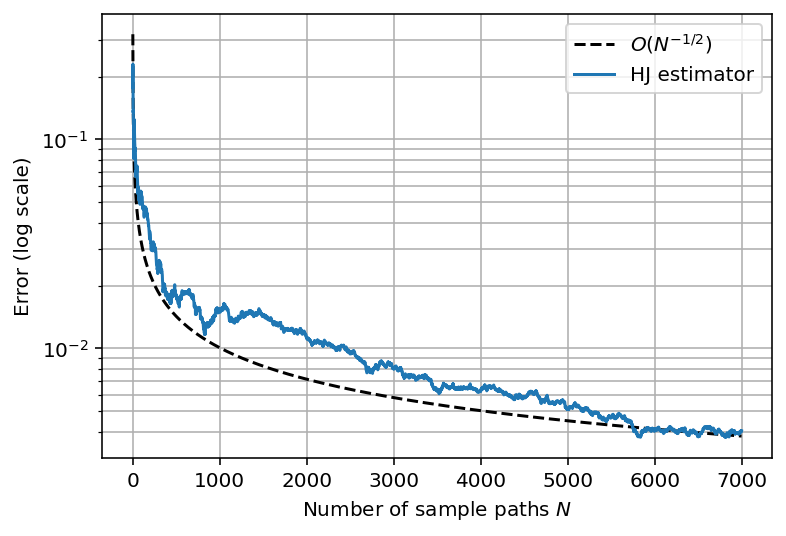}
\caption{Convergence of Ornstein-Uhlenbeck Malliavin quotient \eqref{eq:mall_ce}, with respect to simulated paths $N$. We see that \eqref{eq:mall_ce} recovers a $O(N^{-1/2})$ convergence rate \textit{even though we condition on a measure-zero event}. }\label{fig:mc_convg}
\end{figure}

\begin{figure}[t]
    \centering
    \includegraphics[width=0.8\linewidth]{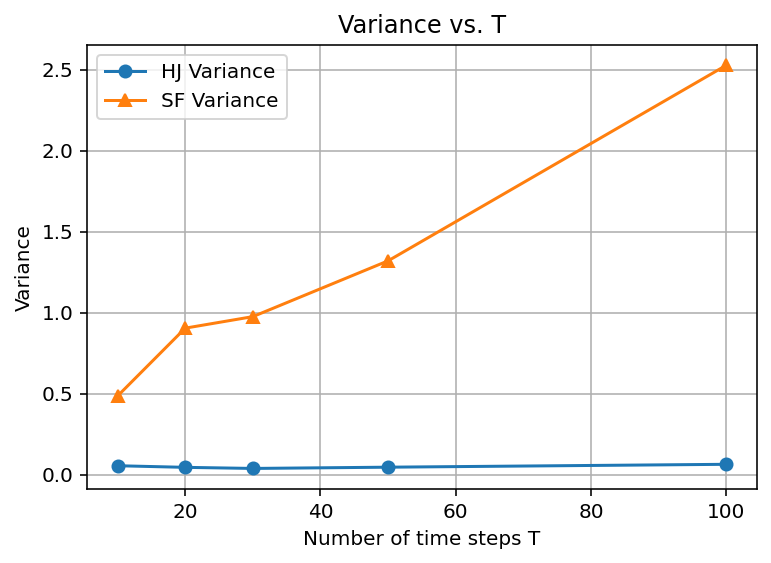}
    \caption{Variance scaling of the weak derivative estimator and the score function estimator, for varying time horizon $T$. We verify the stable $O(1)$ variance scaling of the weak derivative estimator, in contrast to the $O(T)$ variance scaling of the score function estimator.}\label{fig:varscale}
    \label{fig:varscale}
\end{figure}

\section{Conclusion}

We have presented a methodology for counterfactual stochastic optimization of conditional loss functionals. As explained, this procedure can be viewed as a form of passive learning. Instead of relying on kernel methods  or direct conditional Monte-Carlo, we exploit a reformulation of the conditional expectation by Malliavin calculus. This allows for recovery of the $O(N^{-1/2})$ Monte-Carlo convergence rate even when conditioning on rare or measure-zero events, where direct Monte-Carlo becomes impossible and kernel smoothing methods infeasible and inefficient. Furthermore, we combine this approach with a weak-derivative gradient estimation algorithm which incurs stable $O(1)$ variance scaling in the time-horizon, in contrast to score function methods which scale as $O(T)$. In future work it is worthwhile generalizing the above approach to counterfactual Langevin dynamics type algorithms.

Finally, recall  that classical counterfactual risk evaluation seeks to evaluate $\E_{p(x|\beta)}\{ \loss(x)\}$ given simulations of $\loss(x)$ drawn from $p(x|\alpha)$. 
Then clearly $$\E_{p(x|\beta)}\{ \loss(x) \} = \E_{p(x|\alpha)} \{ \loss(X) p(X|\beta)/p(X|\alpha) \}. $$
In this paper, we extend this setting to continuous time,  where both the loss and conditioning event are  functionals of the trajectory. Then expressions for $p(X|\beta)$ and $p(X|\alpha)$ are not available in closed form.  The classical  importance-sampling identity relies on  absolute continuity of $p(\cdot|\alpha)$ and $p(\cdot|\beta)$. 
In our continuous-time framework, however, the conditioning event is a zero-probability path
functional ($g(\Xt)=0$), 
so the ratio $p(X|\beta)/p(X|\alpha)$ is no longer meaningful. 
To address this, we exploit the known dynamics of the SDE~\eqref{eq:sde} and  replace the likelihood ratio by a Malliavin calculus representation involving Dirac delta functionals and 
Skorohod integrals, which yields a constructive estimator of the conditional expectation.

\bibliographystyle{IEEEtran}
\bibliography{$HOME/texstuff/styles/bib/vkm}

\begin{thebibliography}{10}
\providecommand{\url}[1]{#1}
\csname url@samestyle\endcsname
\providecommand{\newblock}{\relax}
\providecommand{\bibinfo}[2]{#2}
\providecommand{\BIBentrySTDinterwordspacing}{\spaceskip=0pt\relax}
\providecommand{\BIBentryALTinterwordstretchfactor}{4}
\providecommand{\BIBentryALTinterwordspacing}{\spaceskip=\fontdimen2\font plus
\BIBentryALTinterwordstretchfactor\fontdimen3\font minus
  \fontdimen4\font\relax}
\providecommand{\BIBforeignlanguage}[2]{{%
\expandafter\ifx\csname l@#1\endcsname\relax
\typeout{** WARNING: IEEEtran.bst: No hyphenation pattern has been}%
\typeout{** loaded for the language `#1'. Using the pattern for}%
\typeout{** the default language instead.}%
\else
\language=\csname l@#1\endcsname
\fi
#2}}
\providecommand{\BIBdecl}{\relax}
\BIBdecl

\bibitem{FLL01}
E.~Fourni{\'e}, J.-M. Lasry, J.~Lebuchoux, and P.-L. Lions, ``Applications of
  {M}alliavin calculus to {M}onte-{C}arlo methods in finance. {II},''
  \emph{Finance and Stochastics}, vol.~5, no.~2, pp. 201--236, 2001.

\bibitem{BET04}
B.~Bouchard, I.~Ekeland, and N.~Touzi, ``On the {M}alliavin approach to {M}onte
  {C}arlo approximation of conditional expectations,'' \emph{Finance and
  Stochastics}, vol.~8, no.~1, pp. 45--71, 2004.

\bibitem{Cri10}
D.~Crisan, K.~Manolarakis, and N.~Touzi, ``On the {M}onte {C}arlo simulation of
  {BSDE}s: An improvement on the {M}alliavin weights,'' \emph{Stochastic
  Processes and their Applications}, vol. 120, no.~7, pp. 1133--1158, 2010.

\bibitem{Pfl96}
G.~Pflug, \emph{Optimization of Stochastic Models: The Interface between
  Simulation and Optimization}.\hskip 1em plus 0.5em minus 0.4em\relax Kluwer
  Academic Publishers, 1996.

\bibitem{HV08}
B.~Heidergott and F.~V{\'a}zquez-Abad, ``Measure-valued differentiation for
  markov chains,'' \emph{Journal of Optimization Theory and Applications}, vol.
  136, no.~2, pp. 187--209, 2008.

\bibitem{KV12}
V.~Krishnamurthy and F.~V. Abad, ``Gradient based policy optimization of
  constrained unichain {M}arkov decision processes,'' in \emph{Stochastic
  Processes, Finance and Control: A Festschrift in Honor of Robert J. Elliott},
  S.~Cohen, D.~Madan, and T.~Siu, Eds.\hskip 1em plus 0.5em minus 0.4em\relax
  World Scientific, 2012, \textit{http://arxiv.org/abs/1110.4946}.

\bibitem{KY03}
H.~J. Kushner and G.~Yin, \emph{Stochastic Approximation Algorithms and
  Recursive Algorithms and Applications}, 2nd~ed.\hskip 1em plus 0.5em minus
  0.4em\relax Springer-Verlag, 2003.

\bibitem{NPT89}
A.~V. Nazin, B.~T. Polyak, and A.~B. Tsybakov, ``Passive stochastic
  approximation,'' \emph{Automation and Remote Control}, no.~50, pp.
  1563--1569, 1989.

\bibitem{YY96}
G.~Yin and K.~Yin, ``Passive stochastic approximation with constant step size
  and window width,'' \emph{IEEE Transactions on Automatic Control}, vol.~41,
  no.~1, pp. 90--106, 1996.

\bibitem{KY22}
V.~Krishnamurthy and G.~Yin, ``Multikernel passive stochastic gradient
  algorithms and transfer learning,'' \emph{IEEE Transactions on Automatic
  Control}, vol.~67, no.~4, pp. 1792--1805, 2022.

\bibitem{Nua06}
D.~Nualart, \emph{The Malliavin calculus and related topics}.\hskip 1em plus
  0.5em minus 0.4em\relax Springer, 2006.

\bibitem{GM05}
E.~Gobet and R.~Munos, ``Sensitivity analysis using {I}to--{M}alliavin calculus
  and martingales, and application to stochastic optimal control,'' \emph{SIAM
  Journal on control and optimization}, vol.~43, no.~5, pp. 1676--1713, 2005.

\bibitem{Bil86}
P.~Billingsley, \emph{Probability and Measure}.\hskip 1em plus 0.5em minus
  0.4em\relax New York: Wiley, 1986.

\bibitem{MT93}
S.~P. Meyn and R.~L. Tweedie, ``Stability of {M}arkovian processes {III}:
  {F}oster-{L}yapunov criteria for continuous time processes,'' \emph{Adv.\
  Applied Probability}, vol.~25, pp. 518--548, 1993.

\end{thebibliography}

\end{document}